\documentclass[reqno]{amsart}
\usepackage{bbm}
\usepackage{mathrsfs}
\usepackage{color}
\usepackage{amsmath}
\usepackage{amsfonts}
\usepackage{mathtools}
\usepackage{amssymb}
\usepackage{graphicx}
\usepackage{hyperref}

\hypersetup{
	colorlinks=true,
	linkcolor=blue,
	filecolor=red,      
	urlcolor=cyan,
	citecolor=green,
}


 \newtheorem{theorem}{Theorem}[section]
 \newtheorem{question}[theorem]{Question}
 \newtheorem{remark}[theorem]{Remark}
 
 \numberwithin{equation}{section}


\def\Psh{\mathrm{Psh}}
\def\loc{\mathrm{loc}}

\def\H{\mathcal{H}}
\def\O{\mathcal{O}}

\def\bbC{\mathbb{C}}

\def\e{\mathrm{e}}
\def\d{\mathrm{d}}

\def\wt{\widetilde}

\DeclareMathOperator{\ess}{ess}

\DeclareMathOperator{\Vol}{Vol}
\DeclareMathOperator{\esssup}{\ess\sup}

\DeclareMathOperator{\dist}{dist}


\begin{document}

\title[Demailly's approximation of general weights]
 {Demailly's approximation of general weights}

\author{Shijie Bao}
\address{Shijie Bao: Institute of Mathematics, Academy of Mathematics and Systems Science, Chinese Academy of Sciences, Beijing 100190, China}
\email{bsjie@amss.ac.cn}

\author{Qi'an Guan}
\address{Qi'an Guan: School of
Mathematical Sciences, Peking University, Beijing 100871, China}
\email{guanqian@math.pku.edu.cn}

\subjclass[2020]{32A25, 32U05}

\keywords{Demailly's approximation, plurisubharmonic function, Bergman kernel}

\date{}


\begin{abstract}
In this note, we demonstrate the convergence of the Demailly approximation of a general (weakly) upper semi-continuous weight.
\end{abstract}

\maketitle

\section{Introduction}

Let $\Omega$ be a bounded pseudoconvex domain in $\bbC^n$, and let $V\colon \Omega\to [-\infty, +\infty)$ be a Lebesgue measurable function. The Bergman space associated with the weight $\e^{-2mV}$ on $\Omega$ is a Hilbert space defined by
\[\H_{\Omega}(mV)\coloneqq\big\{f\in \O(\Omega)\mid \|f\|_{mV}<+\infty\big\},\]
where the norm is given by
\[\|f\|_{mV}\coloneqq \left(\int_{\Omega}|f|^2e^{-2mV}\d\lambda\right)^{1/2}.\]
For each $m\ge 1$, the \textbf{Demailly approximation} $V_m$ of $V$ is defined as
\begin{equation}\label{eq-Dem.approx}
    V_m(z)\coloneqq\frac{1}{2m}\log K_{mV}(z)\coloneqq\frac{1}{2m}\log\sup_{\substack{f\in \H_{\Omega}(mV),\\ \|f\|_{mV}=1}}|f(z)|^2, \quad \forall z\in\Omega,
\end{equation}
where $K_{mV}$ denotes the Bergman kernel for the weight $e^{-2mV}$ on $\Omega$.

When $V\in\Psh(\Omega)$, i.e., $V$ is plurisubharmonic (psh) on $\Omega$, Demailly's approximation theorem (see \cite{Dem92, AMAG}) asserts that $V_m\to V$ pointwise and in $L^1_{\loc}$ on $\Omega$ as $m\to+\infty$:

\begin{theorem}[Demailly's approximation theorem \cite{Dem92}]\label{thm-Demailly.approx}
    Let $V\in\Psh(\Omega)$. Then there are constants $C_1, C_2>0$ independent of $m$ such that
    \[V(z)-\frac{C_1}{m}\le V_m(z)\le\sup_{|\zeta-z|<r}V(\zeta)+\frac{1}{m}\log\frac{C_2}{r^n}\]
    for every $z\in\Omega$ and $r<\dist(z, \partial\Omega)$. In particular, $V_m$ converges to $V$ pointwise and in $L_{\loc}^1$ topology on $\Omega$ when $m\to +\infty$.
\end{theorem}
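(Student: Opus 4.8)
The plan is to establish the two-sided inequality and then read off the two convergence statements. The lower bound will rely on the Ohsawa--Takegoshi $L^2$ extension theorem and the upper bound on the sub-mean value property of plurisubharmonic functions; I expect the former to be the only substantial ingredient, and in particular the uniformity of its constant in $m$ and $z$ to be the point that makes the argument work.

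For the lower bound I would fix $z\in\Omega$ with $V(z)>-\infty$, the case $V(z)=-\infty$ being vacuous. Since $2mV$ is plurisubharmonic on the bounded pseudoconvex domain $\Omega$, applying Ohsawa--Takegoshi to extension from the single point $z$ yields $f\in\O(\Omega)$ with $f(z)=1$ and
\[\int_\Omega |f|^2\,\e^{-2mV}\,\d\lambda \le C_0\,\e^{-2mV(z)},\]
where $C_0$ depends only on $n$ and $\operatorname{diam}\Omega$ --- crucially not on $m$ or $z$. Then $f\in\H_\Omega(mV)$, and normalizing $g\coloneqq f/\|f\|_{mV}$ produces a unit-norm element with $|g(z)|^2=\|f\|_{mV}^{-2}\ge C_0^{-1}\e^{2mV(z)}$, so $K_{mV}(z)\ge C_0^{-1}\e^{2mV(z)}$ and $V_m(z)\ge V(z)-\tfrac{1}{2m}\log C_0$; one then sets $C_1=\tfrac12\log\max(C_0,1)$.

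For the upper bound I would fix $z$ and $r<\dist(z,\partial\Omega)$, so that $\overline{B(z,r)}\subset\Omega$ and $M\coloneqq\sup_{|\zeta-z|<r}V(\zeta)<+\infty$ by upper semicontinuity of $V$ on this compact ball. For any $f\in\H_\Omega(mV)$ with $\|f\|_{mV}=1$, the function $|f|^2$ is plurisubharmonic, so the sub-mean value inequality over $B(z,r)$ together with the crude estimate $\e^{-2mV}\ge\e^{-2mM}$ on that ball gives
\[|f(z)|^2 \le \frac{1}{\Vol(B(z,r))}\int_{B(z,r)}|f|^2\,\d\lambda \le \frac{\e^{2mM}}{\Vol(B(z,r))}\int_{B(z,r)}|f|^2\,\e^{-2mV}\,\d\lambda \le \frac{\e^{2mM}}{c_n r^{2n}},\]
with $c_n=\pi^n/n!$ the volume of the unit ball. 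Taking the supremum over such $f$ and then $\tfrac{1}{2m}\log$ of both sides yields $V_m(z)\le M+\tfrac{1}{m}\log(C_2/r^n)$ with $C_2=(n!/\pi^n)^{1/2}$.

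Finally, for convergence I would pass to the limit $m\to\infty$ in the two estimates. The lower bound gives $\liminf_m V_m(z)\ge V(z)$, while the upper bound gives $\limsup_m V_m(z)\le\sup_{|\zeta-z|<r}V(\zeta)$ for every admissible $r$, and letting $r\to0^+$ the right side decreases to $V(z)$ by upper semicontinuity; hence $V_m\to V$ pointwise on $\Omega$. For $L^1_{\loc}$ convergence, on a fixed compact $K\subset\Omega$ the upper bound shows $V_m\le C$ uniformly in $m$ (bounding $V$ above on a compact neighborhood of $K$) while the lower bound gives $V_m\ge V-C_1$; since $V\in L^1_{\loc}(\Omega)$, the $V_m$ are dominated on $K$ by $|V|+C$, and (noting that $K_{mV}$ is lower semicontinuous, so each $V_m$ is measurable) dominated convergence upgrades the pointwise convergence to convergence in $L^1(K)$. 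The only place where anything delicate happens is the invocation of Ohsawa--Takegoshi with a constant independent of $m$ and $z$; everything else is elementary.
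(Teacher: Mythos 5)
Your proposal is correct and follows exactly the standard argument that the paper relies on: the Ohsawa--Takegoshi extension from a point (with constant depending only on $n$ and $\operatorname{diam}\Omega$) for the lower bound, and the sub-mean value inequality for $|f|^2$ combined with $\e^{-2mV}\ge \e^{-2m\sup_{B(z,r)}V}$ for the upper bound, from which the pointwise and $L^1_{\loc}$ convergence follow as you describe. The paper itself only cites this theorem (noting the left-hand inequality comes from Ohsawa--Takegoshi) and reuses the same mean-value estimate in its proof of the Main Theorem, so your route coincides with the intended one.
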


The left-hand inequality in Theorem \ref{thm-Demailly.approx} follows from the Ohsawa-Takegoshi $L^2$ extension theorem (\cite{OT87}). Demailly's approximation theorem provides a systematic framework for approximating general psh weights by those with analytic singularities, a technique with profound implications in algebraic geometry (e.g. \cite{Dem93, DK01, Cao14}). For instance, Theorem \ref{thm-Demailly.approx} immediately yields Siu's theorem (\cite{Siu74}) on the analyticity of Lelong number upper-level sets, which states that for every $V\in\Psh(\Omega)$ and $c>0$, the set
\[E_c(V)\coloneqq\big\{z\in \Omega\mid \nu(V,z)\ge c\big\}\]
is an analytic subset of $\Omega$. See \cite[Chapter 14]{AMAG} for detailed proofs and further applications of Demailly's approximation theorem.

\subsection{Demailly's approximation of upper semi-continuous weights}

A natural question arises: \emph{Does Demailly's approximation converges to a specific function in more general settings}? This problem and its solution might be probably known to experts, but in the literature accessible to us, we have not found any direct statements regarding the results of this issue. For smooth weights, Matsumura proposed the following more precise question during an intensive course at Peking University:

\begin{question}[\cite{Matsu24}]\label{conj-Matsu}
    Let $V$ be a smooth function on $\Omega$, and define
    \[\wt{V}\coloneqq\sup\big\{\psi\in\Psh(\Omega)\mid \psi\le V \text{ on } \Omega\big\}.\]
    Does $V_m$ converge to $\wt{V}$ in some sense?
\end{question}

In this paper, we affirmatively answer Question \ref{conj-Matsu}, and extend the results to non-smooth weights. Our main theorem is stated as follows:

\begin{theorem}[Main Theorem]\label{thm-main.thm}
    Let $\Omega$ be a bounded pseudoconvex domain in $\bbC^n$, and $V\colon \Omega \to [-\infty, +\infty)$ be a Lebesgue measurable function on $\Omega$. For every $m>0$, let $V_m$ be the Demailly's approximation of $V$ defined in (\ref{eq-Dem.approx}). Suppose that $V$ is weakly upper semi-continuous on $\Omega$. Let
    \[\wt{V}\coloneqq\sup\big\{\psi\in\Psh(\Omega)\mid \psi\le V \text{ on } \Omega\big\}.\]
    Then $V_m$ converges to $\wt{V}$ pointwise as $m\to +\infty$. 
\end{theorem}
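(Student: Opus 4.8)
The plan is to prove $\widetilde V(z)\le\liminf_{m\to\infty}V_m(z)$ and $\limsup_{m\to\infty}V_m(z)\le\widetilde V(z)$ for every $z\in\Omega$. The first inequality is the ``soft'' half and rests only on Ohsawa--Takegoshi, already encoded in Theorem~\ref{thm-Demailly.approx}: for any $\psi\in\Psh(\Omega)$ with $\psi\le V$ one has $\e^{-2m\psi}\ge\e^{-2mV}$, hence $\|f\|_{m\psi}\ge\|f\|_{mV}$ for all $f\in\O(\Omega)$, hence $\H_{\Omega}(m\psi)\subseteq\H_{\Omega}(mV)$ with $\|\cdot\|_{mV}\le\|\cdot\|_{m\psi}$ on it, and therefore $K_{mV}\ge K_{m\psi}$ on $\Omega$, i.e.\ $V_m\ge\psi_m$ with $\psi_m$ the Demailly approximation of $\psi$. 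The left inequality of Theorem~\ref{thm-Demailly.approx} applied to the psh weight $\psi$ gives $\psi_m(z)\ge\psi(z)-C_1/m$ with $C_1$ independent of $m$ and of $\psi$ (it is the extension constant from \cite{OT87}); letting $m\to\infty$ and then taking the supremum over all such $\psi$ yields $\liminf_m V_m(z)\ge\widetilde V(z)$.

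For the upper bound, note first that the right inequality of Theorem~\ref{thm-Demailly.approx} needs no plurisubharmonicity of the weight: for $f\in\H_{\Omega}(mV)$ with $\|f\|_{mV}=1$, the sub-mean value inequality for $|f|^2$ on a ball $B(z,r)\Subset\Omega$ together with $\e^{2mV}\le\e^{2m\sup_{B(z,r)}V}$ there gives $|f(z)|^2\le C_2 r^{-2n}\e^{2m\sup_{B(z,r)}V}$, whence $V_m(z)\le\sup_{B(z,r)}V+\tfrac1m\log(C_2/r^n)$ and, letting $m\to\infty$ then $r\to0$, $\limsup_m V_m(z)\le V^*(z)$, where $V^*$ is the upper semi-continuous regularization of $V$. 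By the weak upper semi-continuity of $V$ one has $V^*=V$ outside a negligible set. On the other hand, each $V_m=\tfrac1{2m}\log K_{mV}$ is plurisubharmonic on $\Omega$ (the logarithm of a weighted Bergman kernel), and the family $(V_m)$ is locally uniformly bounded above by $V^*$; hence $u:=(\limsup_m V_m)^*$ is plurisubharmonic with $u\le V^*$ almost everywhere, and then, by the sub-mean value property of $u$ and the upper semi-continuity of $V^*$, with $u\le V^*$ everywhere. In particular $u$ is a plurisubharmonic function lying below $V$ outside a negligible set, and $\limsup_m V_m\le u$.

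It remains to pass from ``$u$ plurisubharmonic, $u\le V$ off a negligible set'' to $u\le\widetilde V$ everywhere, after first checking that $\widetilde V$ is itself plurisubharmonic (equivalently, that the supremum defining it is already upper semi-continuous), which one verifies by comparing the envelope with the upper semi-continuous function $V^*$ and using that the exceptional set is pluripolar. Picking $h\in\Psh(\Omega)$ with $h\le0$ and $h\equiv-\infty$ on that pluripolar exceptional set, the functions $u+\varepsilon h$ ($\varepsilon>0$) are plurisubharmonic and satisfy $u+\varepsilon h\le V$ on all of $\Omega$, hence $u+\varepsilon h\le\widetilde V$; letting $\varepsilon\to0$ and using plurisubharmonicity of $\widetilde V$ to propagate the inequality across the exceptional set gives $u\le\widetilde V$, and therefore $\limsup_m V_m\le u\le\widetilde V$. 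Combined with the lower bound this proves $V_m\to\widetilde V$ pointwise. I expect the decisive difficulty to be exactly this transition: the mean value estimate only delivers $\limsup_m V_m\le V$, and forcing it down to the envelope $\widetilde V$ requires exploiting that the limiting object is plurisubharmonic together with a careful treatment of the negligible (pluripolar) set where $V$ may fail to be upper semi-continuous. A minor point to dispose of separately is the degenerate case $\H_{\Omega}(mV)=\{0\}$, where the hypothesis on $V$---which makes $V^*$ locally bounded above and hence the weighted Bergman spaces nontrivial on suitable subdomains---is used once more.
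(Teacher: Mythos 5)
Your overall architecture is the same as the paper's: the lower bound $\liminf_m V_m\ge\wt V$ via monotonicity of weighted Bergman kernels plus the Ohsawa--Takegoshi estimate (your variant, applying it to each competitor $\psi\le V$ and then taking the supremum, is correct and even avoids needing $\wt V\in\Psh(\Omega)$ for that half), and the upper bound via the sub-mean value inequality for $|f|^2$. The gap is in how you close the upper bound. You bound $\limsup_m V_m$ by the classical usc regularization $V^*$ (built from $\sup_{B(z,r)}V$), assert without proof that weak upper semi-continuity gives $V^*=V$ off a negligible set, and then dispose of that set by picking $h\in\Psh(\Omega)$ with $h\equiv-\infty$ on it. That step is not justified: ``negligible'' here means Lebesgue-null, and Lebesgue-null sets need not be pluripolar (e.g.\ $\Omega\cap\bbR^n$), so no such $h$ need exist; the pluripolar set you do control --- where the envelope $\wt V$ differs from its usc regularization, pluripolar by Bedford--Taylor --- is a different set and there is no reason it should contain $\{u>V\}$. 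The issue is not cosmetic: for a merely usc weight such as $V=-\chi_K$ with $K$ compact, Lebesgue-null but non-pluripolar, the psh function $u\equiv 0$ satisfies $u\le V$ almost everywhere yet $u\not\le\wt V$ (since $\wt V\le u^*_{K,\Omega}<0$ somewhere). So ``psh and $\le V$ off a null set'' genuinely does not imply ``$\le\wt V$'', and measure-zero reasoning alone cannot bridge it; this is exactly the ``decisive difficulty'' you flagged, and your proposed resolution does not resolve it.

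The repair is precisely where the paper's $\esssup$-based definition of weak usc does its work. Run the mean-value estimate with the essential supremum: $|f(z)|^2\le\Vol(B(z,r))^{-1}\,\|f\|^2_{mV}\,\esssup_{B(z,r)}\e^{2mV}$, which yields $\limsup_m V_m(z)\le\esssup_{B(z,r)}V$ for every admissible $r$, hence $\limsup_m V_m\le V^{\star}=V$ \emph{everywhere}, directly from the hypothesis. Then $u\coloneqq(\limsup_m V_m)^{\star}$ is psh (the paper cites Demailly, CADG Prop.~4.24) with $u\le V$ everywhere, so $u\le\wt V$ by the very definition of the envelope; no exceptional set, no auxiliary $h$, and no plurisubharmonicity of $\wt V$ is needed for this half. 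Alternatively, you could salvage your $V^*$-route by proving the missing lemma: the weak regularization $V^{\star}$ is automatically upper semi-continuous, so the hypothesis $V=V^{\star}$ forces $V$ to be usc and hence $V^*=V$ everywhere, making your exceptional set empty --- but as written, that lemma is absent and its role is papered over by an invalid null-versus-pluripolar substitution. (The degenerate case $\H_{\Omega}(mV)=\{0\}$, incidentally, is handled automatically with the convention $\log 0=-\infty$; your remark that the hypothesis forces nontriviality is not quite right, since weakly usc weights with $V_m\equiv-\infty\equiv\wt V$ do exist, but this does not affect the statement.)
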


We call that $V$ is \textbf{weakly upper semi-continuous} on $\Omega$, if the \textbf{(weakly) upper semi-continuous regularization} of $V$ defined by:
    \[V^{\star}(a)\coloneqq \lim_{r\to 0^+}\Big(\mathop{\esssup}\limits_{z\in B(a,r)}V(z)\Big), \quad \forall a\in \Omega,\]
equals to $V$ itself, where `$\esssup$' denotes the essential supremum.

\begin{remark}
    If $V$ is continuous on $\Omega$, then $V^{\star}\equiv V$, and Theorem \ref{thm-main.thm} implies $V_m\to \wt{V}$ pointwise, thereby answering Question \ref{conj-Matsu}.
\end{remark}

\vspace{.1in} {\em Acknowledgements}.
The authors would like to thank Professor Shin-ichi Matsumura for his talk at Peking University, which brought this question to our attention. The first author would also like to express gratitude to Professor Shin-ichi Matsumura, and Ms. Bowen Chen for their valuable discussions, and to Professor Wang Xu for pointing out that the example in the first version of this paper is not appropriate. The second author was supported by National Key R\&D Program of China 2021YFA1003100, NSFC-12425101 and NSFC-11825101.

\section{Proof of the Main Theorem}
Now we prove Theorem \ref{thm-main.thm}.
\begin{proof}[Proof of Theorem \ref{thm-main.thm}]
    Note that $V$ is locally upper bounded on $\Omega$ due to its weakly upper semi-continuity. Then we can let $(\wt{V})^{\star}$ be the (weakly) upper semi-continuity regularization of $\wt{V}$.  Observe that $(\wt{V})^{\star}\le V^{\star}=V$, since $V=V^{\star}$ and $\wt{V}\le V$. Meanwhile, we  have $(\wt{V})^{\star}\in\Psh(\Omega)$ (see \cite[Proposition 4.24]{CADG}, and see also \cite{Dem85}). It follows that $\wt{V}=(\wt{V})^{\star}\in\Psh(\Omega)$ by the definition of $\wt{V}$.

    Define
    \[\Phi(z)\coloneqq\Big(\limsup_{m\to+\infty}V_m(z)\Big)^{\star}, \quad \forall z\in\Omega.\]
    Since every $V_m$ is a psh function on $\Omega$, the function $\Phi\in \Psh(\Omega)$. We prove $\Phi\le \wt{V}$ on $\Omega$. For every $f\in\H_{\Omega}(mV)$, the mean value inequality applied to $|f|^2$ yields
    \begin{equation*}
            |f(z)|^2\le\frac{1}{\Vol(B(z,r))}\int_{B(z,r)}|f|^2\le\frac{1}{\Vol(B(z,r))}\|f\|^2_{mV}\cdot\mathop{\esssup}\limits_{B(z,r)}e^{2mV},
    \end{equation*}
    which implies
    \[K_{mV}(z)\le\frac{1}{\Vol(B(z,r))}\cdot\mathop{\esssup}\limits_{B(z,r)}e^{2mV},\]
    and thus
    \[V_m(z)\le\mathop{\esssup}\limits_{B(z,r)}V+\frac{1}{m}\log\frac{C_2}{r^n},\]
    where $C_2>0$ is a constant depending only on $n$, and $r<\dist(z, \partial\Omega)$. Taking $m\to+\infty$ and $r\to 0^+$, we obtain $\Phi\le V^{\star}=V$. By the definition of $\wt{V}$, it follows that $\Phi\le\wt{V}$, and thus
    \begin{equation}\label{eq-limsup}
    \limsup_{m\to+\infty}V_m(z)\le \Phi(z)\le \wt{V}(z), \quad \forall z\in\Omega.
    \end{equation}

    On the other hand, since $\wt{V}\le V$ on $\Omega$ by the definition, every holomorphic function $F\in\H_{\Omega}(m\wt{V})$ belongs to $\H_{\Omega}(mV)$, with $\|F\|_{m\wt{V}}\ge\|F\|_{mV}$. It follows that
    $K_{m\wt{V}}(z)\le K_{mV}(z)$. Let 
    \[\wt{V}_m(z)\coloneqq\frac{1}{2m}\log K_{m\wt{V}}(z), \quad \forall z\in\Omega.\]
    Then $V_m\ge\wt{V}_m$  on $\Omega$ for every $m>0$. Using Demailly's approximation theorem (or the Ohsawa-Takegoshi $L^2$ extension theorem \cite{OT87}), we obtain
    \begin{equation*}
        \wt{V}_m(z)\ge\wt{V}(z)-\frac{C_1}{m},  \quad \forall z\in\Omega,
    \end{equation*}
    where $C_1$ is a constant depending only on $\Omega$ and $n$. Since $V_m\ge\wt{V}_m$, it follows that
    \[\liminf_{m\to+\infty}V_m(z)\ge \liminf_{m\to+\infty} \wt{V}_m(z)\ge \wt{V}(z), \quad \forall z\in\Omega.\]
    Hence, combining with (\ref{eq-limsup}), we get $V_m(z)\to\wt{V}(z)$ pointwise.
\end{proof}

\end{document}